\DeclarePairedDelimiter{\ceil}{\lceil}{\rceil}
\DeclareMathSymbol{\invques}{\mathord}{operators}{`>}
\DeclareRobustCommand{\tmquestiondown}{%
  \ifmmode\invques\else\textquestiondown\fi
}
\newtheorem{thm}{Theorem}[section]
\newtheorem{lem}[thm]{Lemma}
\newtheorem{prop}[thm]{Proposition}
\newtheorem{cor}[thm]{Corollary}
\newtheorem{remark}[thm]{Remark}
\newcommand{\Qp}{\mathbb{Q}_p}
\newcommand{\Zp}{\mathbb{Z}_p}
\newcommand{\ord}{\operatorname{ord}}
\newcommand{\bthm}{\begin{thm}}
\newcommand{\ethm}{\end{thm}}
\newcommand{\bprop}{\begin{prop}}
\newcommand{\eprop}{\end{prop}}
\newcommand{\blem}{\begin{lem}}
\newcommand{\elem}{\end{lem}}
\newcommand{\bcor}{\begin{cor}}
\newcommand{\ecor}{\end{cor}}
\newcommand{\beq}{\begin{equation}}
\newcommand{\eeq}{\end{equation}}
\newcommand{\bpr}{\begin{proof}}
\newcommand{\epr}{\end{proof}}
\newcommand{\brem}{\begin{remark}}
\newcommand{\erem}{\end{remark}}
\def\bal#1\eal{\begin{align*}#1\end{align*}}
\def\bga#1\ega{\begin{gather*}#1\end{gather*}}
\let\origsubsection\subsection
\renewcommand\subsection{\@ifstar{\starsubsection}{\nostarsubsection}}
\newcommand\nostarsubsection[2][\relax]{%
  \subsectionprelude%
  \ifx\relax#1\origsubsection{#2}\else\origsubsection[#1]{#2}\fi%
  \subsectionpostlude}
\newcommand\starsubsection[2][\relax]{%
  \subsectionprelude%
  \ifx\relax#1\origsubsection*{#2}\else\origsubsection*[#1]{#2}\fi%
  \subsectionpostlude}
\newcommand\subsectionprelude{%
  \vspace{.75\parskip}% OR ANY OTHER DESIRED VALUE
}
\newcommand\subsectionpostlude{%
  \vspace{-\parskip}% OR ANY OTHER DESIRED VALUE
}
\begin{document}

\title{Explicit uniformizers for certain totally ramified extensions of the field of $p$-adic numbers
%\thanks{Grants or other notes
%about the article that should go on the front page should be
%placed here. General acknowledgments should be placed at the end of the article.}
}
%\subtitle{Do you have a subtitle?\\ If so, write it here}

%\titlerunning{Explicit uniformizers for certain totally ramified extensions of $\Qp$}        % if too long for running head

\begin{abstract}
Let $p$ be an odd prime number. We construct explicit uniformizers for the totally ramified extension $\Qp(\zeta_{p^2},\sqrt[p]{p})$ of the field of $p$-adic numbers $\Qp$, where $\zeta_{p^2}$ is a primitive $p^2$-th root of unity.
\keywords{Explicit uniformizers of local fields \and totally ramified extensions of $p$-adic fields}
% \PACS{PACS code1 \and PACS code2 \and more}
\subjclass{11S15\and 11F85\and 11S20}
\end{abstract}

\author{Hugues Bellemare}
\address{Hugues Bellemare\newline 
Department of Mathematics and Statistics\\McGill University\\
805 Sherbrooke Street West\\
Montréal, QC\\
Canada H3A 0B9 }
\email{hugues.bellemare@mail.mcgill.ca}

\author{Antonio Lei}
\address{Antonio Lei\newline
D\'epartement de Math\'ematiques et de Statistique\\
Universit\'e Laval, Pavillion Alexandre-Vachon\\
1045 Avenue de la M\'edecine\\
Qu\'ebec, QC\\
Canada G1V 0A6}
\email{antonio.lei@mat.ulaval.ca}

\subjclass[2010]{Primary 11S15; Secondary 11F85, 11S20 }
\keywords{Explicit uniformizers of local fields, totally ramified extensions of $p$-adic fields}

\maketitle
\section{Introduction}

Let $p$ be a prime number and $m\ge1$ an integer. Let $\zeta_{p^m}$ be a primitive $p^m$-th root of unity. It is a well-known fact that  $\Qp(\zeta_{p^m})/\Qp$ is a totally ramified extension and that $\zeta_{p^m}-1$ is a uniformizer for $\Qp(\zeta_{p^m})$. Similarly, if $n\ge1$ is an integer, then $\Qp(\sqrt[p^n]{p})/\Qp$ is also a totally ramified extension and $\sqrt[p^n]{p}$ is a uniformizer for this extension. The compositum $K_{m,n}:=\Qp(\zeta_{p^m},\sqrt[p^n]{p})$ is {a totally ramified extension of $\Qp$ of degree $p^{m+n-1}(p-1)$}. It is thus natural to search for an explicit uniformizer for this field.

Indeed, as explained by Viviani in \cite{viv}, explicit uniformizers in $K_{m,n}$ allow us to compute ramification groups of the extension. We expect that a norm-compatible system of uniformizers as $m$ and $n$ vary would have applications in non-commutative Iwasawa Theory as well. For example, in the case of elliptic curves with supersingular reduction at $p$ with $p>3$, Kobayashi \cite{kobayashi03} has constructed a  system of local points on an elliptic curve defined over $\Qp(\zeta_{p^m})$ for $m\ge1$ using the uniformizers $\zeta_{p^m}-1$. These points have led to the definition of plus and minus Selmer groups, which have played an important role in the study of supersingular Iwasawa Theory in recent years. Kim has generalized Kobayashi's construction to the $\Zp^2$-extensions over an imaginary quadratic field where $p$ splits in \cite{kimZp2} as well as to abelian varieties in \cite{kimAV}. A system of uniformizers of $K_{m,n}$ could potentially allow us to define the appropriate analogues of Kobayashi's Selmer groups over these fields.

In \cite[Lemma~6.4]{viv}, Viviani has showed that 
\begin{equation}\label{eq:viv}
    \frac{1-\zeta_p}{\sqrt[p]{p} \cdots \sqrt[p^n]{p}}
\end{equation}
is a uniformizer of $K_{1,n}$. Indeed, if  $\ord_p$ denotes the $p$-adic valuation on $\overline{\Qp}$ normalized by $\ord_p(p)=1$, then
\[
\ord_p\left(\frac{1-\zeta_p}{\sqrt[p]{p}\cdots \sqrt[p^n]{p}}\right)=\frac{1}{p-1}-\sum_{i=1}^n\frac{1}{p^i}=\frac{1}{p^n(p-1)}=\frac{1}{[K_{1,n}:\Qp]}.
\]
Thus, the expression in \eqref{eq:viv} is indeed a uniformizer for $K_{1,n}$. A naive attempt to generalize this construction to $K_{m,n}$ with $m\ge2$ does not result in a uniformizer. The reason is that if we multiply powers of $1-\zeta_{p^m}$ and $\sqrt[p^n]p$, the $p$-adic valuation of such a product will be a linear combination of $\frac{1}{p^{m-1}(p-1)}$ and $\frac{1}{p^n}$, which does not give $\frac{1}{p^{m+n-1}(p-1)}=\frac{1}{[K_{m,n}:\Qp]}$ unless $m=1$ or $n=0$.

In an online discussion on the website StackExchange \cite{mercio} (\url{https://math.stackexchange.com/questions/954731}), the user "Mercio" has suggested a strategy to find explicit uniformizers for $K_{2,1}$. The crux of the construction is to work with the minimal polynomial of $\pi:=\zeta_{p^2}-1$, which gives the equation
\[
\frac{-p}{\pi^{\phi(p^2)}}=1+\sum_{i=1}^{\phi(p^2)-1}a_i\pi^{i-\phi(p^2)},
\]
where $\phi$ is the Euler totient function and $a_i$ are integers divisible by $p$. We write $O(\pi^k)$ to represent an element in $\pi^k\Zp[\pi]$. If we obtain from the  equation above
\[
w^p=c\pi+O(\pi^2)
\]
for certain $c,w\in K_{2,1}$ with $\ord_p(c)=0$, then $\ord_p(w)=\frac{\ord_p(\pi)}{p}=\frac{1}{[K_{2,1}:\Qp]}$. Thus, $w$ would be a uniformizer of $K_{2,1}$. In the case $p=3$, Mercio illustrated this strategy by showing that
\[
\frac{-3}{\pi^{\phi(3^2)}}=1+2\pi^3+\pi^4+O(\pi^5),
\]
where $\pi=\zeta_{3^2}-1$. On setting ${v=\frac{-\sqrt[3]{3}}{(-2-\pi)\pi^2}-1}$, we get
\[
v^3=\pi^4+O(\pi^5).
\]
Dividing both sides by $\pi^3$ results in
\[
w^3=\pi+O(\pi^2),
\]
where $w=v/\pi$. It thus gives a uniformizer.

In this article, we expand Mercio's idea to give a general algorithm that gives us an explicit uniformizer of $K_{2,1}$ for all odd prime $p$. The structure of this article is as follows. We first prove a number of general results on the $p$-adic valuations of certain binomial coefficients in Section~\ref{sec:pre}. We then carry out our construction of explicit uniformizers in Section~\ref{sec:uniformizer}. At the end of the article, we explain why our method does not work for more general $K_{m,n}$.

\subsection*{Acknowledgements}
This work was carried out during two NSERC summer research internships undertaken by the first named author at Universit\'e Laval in 2018 and 2019. The second named author's research is supported by the NSERC Discovery Grants Program 05710. We would like to thank Hugo Chapdelaine, Daniel Delbourgo, C\'edric Dion, Byoung Du Kim and Antoine Poulin for helpful and interesting discussions during the preparation of this article. Finally, we thank the anonymous referee for valuable comments and suggestions on an earlier version of the article.

\section{Notation and preliminary results}\label{sec:pre}
For the rest of this article, $m\ge2$ is a fixed integer and  $\pi$ denotes the uniformizer $\zeta_{p^m}-1$ of $\Qp(\zeta_{p^m})$. We will only apply our results to construct explicit uniformizers when $m=2$. By working with a general $m\ge 2$, we will be able to explain why our method does not extend to the case $m>2$ (see  Remark~\ref{rk:fail} below).

The minimal polynomial of $\pi$ over $\Qp$ is given by
\begin{align*}
    f(x) &= \sum_{k=0}^{p-1}(x+1)^{kp^{m-1}}\\
    &=\sum_{\ell=0}^{\phi(p^m)} \sum_{k=\ceil[\big]{\frac{\ell}{p^{m-1}}}}^{p-1} \binom{kp^{m-1}}{\ell} x^\ell \\ 
&= p + \sum_{\ell=1}^{\phi(p^m)-1} \sum_{k=\ceil[\big]{\frac{\ell}{p^{m-1}}}}^{p-1} \binom{kp^{m-1}}{{\ell}} x^\ell + x^{\phi(p^m)} \\
&= p + \sum_{\ell=1}^{\phi(p^m)-1} a_\ell x^\ell + x^{\phi(p^m)},
\end{align*}
where { $\ceil{x}$ is the ceiling function on the set of real numbers (sending $x$ to the smallest integer $\ge x$) and} $a_\ell=\sum_{k=\ceil[\big]{\frac{\ell}{p^{m-1}}}}^{p-1} \binom{kp^{m-1}}{{\ell}}$. Note that $p|a_\ell$ for all $\ell$ since $f$ is an Eisenstein polynomial. We have
\begin{equation}
    \frac{-p}{\pi^{\phi(p^m)}} = 1+\sum_{\ell=1}^{\phi(p^m)-1} a_\ell\pi^{\ell-\phi(p^m)}.
\label{eq:expand}
\end{equation}
Let $v$ be the normalized $\pi$-adic valuation. In particular $v(p)=\phi(p^m)$ and $v(a_\ell)\ge \phi(p^m)$ for all $\ell$.

We shall be interested in understanding \eqref{eq:expand} modulo $\pi^{d}$, where $d$ is defined by 
\[
d:=(p-2)p^{m-1}+p^{m-2}+1.
\]

\blem\label{lem:largeterms}
If $\ell\ge d$, then $a_\ell \pi^{\ell-\phi(p^m)}=O(\pi^d)$.
\elem
\begin{proof}
This follows from the fact that $p|a_\ell$, which gives $$v(a_\ell\pi^{\ell-\phi(p^m)})=v(a_\ell)+\ell-\phi(p^m)\ge d.$$
\end{proof}

In particular, we deduce from Lemma~\ref{lem:largeterms} that \eqref{eq:expand} implies
\[
\frac{-p}{\pi^{\phi(p^m)}} = 1+\sum_{\ell=1}^{d-1} a_\ell\pi^{\ell-\phi(p^m)}+O(\pi^d).
\]
We shall now study the terms $a_\ell\pi^{\ell-\phi(p^m)}$ for $1\le \ell\le d-1$. We separate $\ell$ in the following cases:
\begin{itemize}
    \item[(i)] $\ell<(p-2)p^{m-1}$ and $\ord_p(\ell)<m-1$;
    \item[(ii)] $\ell=tp^{m-1}$, where $t=1,2,\ldots, p-2$;
    \item[(iii)] $(p-2)p^{m-1}< \ell <d-1$;
    \item[(iv)] $\ell=d-1$.
\end{itemize}

The following lemma will help us to study case (i).
\begin{lem}\label{lem:lwithsmallord}
Suppose that $1\le \ell< (p-2)p^{m-1} $ with $\ord_p(\ell)<m-1$. Then $\ord_p(a_\ell)\ge 2$.
\end{lem}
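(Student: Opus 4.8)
The plan is to estimate, for each $k$ in the range $\lceil \ell/p^{m-1}\rceil \le k \le p-1$, the $p$-adic valuation of the individual binomial coefficient $\binom{kp^{m-1}}{\ell}$ occurring in $a_\ell$, and then to sum the resulting contributions. Write $s=\ord_p(\ell)$ (so $s\le m-2$ by hypothesis) and use the elementary identity $\binom{kp^{m-1}}{\ell}=\frac{kp^{m-1}}{\ell}\binom{kp^{m-1}-1}{\ell-1}$. Since $1\le k\le p-1$ is prime to $p$ and $\ord_p(\ell)=s$, the prefactor contributes $\ord_p(kp^{m-1}/\ell)=m-1-s$, while the remaining binomial coefficient is an integer and so contributes a non-negative amount. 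Hence $\ord_p\binom{kp^{m-1}}{\ell}\ge m-1-s$ for every $k$. When $s\le m-3$ this is already $\ge 2$, so every summand of $a_\ell$ is divisible by $p^2$ and we are done; the only case requiring real work is $s=m-2$.

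For the remaining case $s=m-2$, I would write $\ell=up^{m-2}$ with $p\nmid u$, and further $u=j_0p+r$ with $1\le r\le p-1$, so that $j_0=\lfloor u/p\rfloor$ and $\lceil \ell/p^{m-1}\rceil=j_0+1$. The constraint $\ell<(p-2)p^{m-1}$ gives $u<(p-2)p$, hence $0\le j_0\le p-3$, a bound that will be crucial at the end. Now $\ord_p\binom{kp^{m-1}}{\ell}=1+\ord_p\binom{kp^{m-1}-1}{\ell-1}$, so I only need $\binom{kp^{m-1}-1}{\ell-1}$ modulo $p$, which I would obtain from Lucas' theorem. Reading off the base-$p$ digits of $kp^{m-1}-1$ (namely $p-1$ in positions $0,\dots,m-2$ and $k-1$ in position $m-1$) and of $\ell-1=j_0p^{m-1}+(r-1)p^{m-2}+(p^{m-2}-1)$ (namely $p-1$ in positions $0,\dots,m-3$, then $r-1$ in position $m-2$ and $j_0$ in position $m-1$) yields
\[
\binom{kp^{m-1}-1}{\ell-1}\equiv\binom{p-1}{r-1}\binom{k-1}{j_0}\pmod p.
\]
Multiplying back by $kp^{m-1}/\ell=kp/u$ (note that $u$ is a $p$-adic unit) gives $\binom{kp^{m-1}}{\ell}\equiv \frac{kp}{u}\binom{p-1}{r-1}\binom{k-1}{j_0}\pmod{p^2}$.

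Summing over $j_0+1\le k\le p-1$ (the terms with $k\le j_0$ vanish because $\binom{k-1}{j_0}=0$) then reduces everything modulo $p^2$ to the single congruence
\[
a_\ell\equiv \frac{p}{u}\binom{p-1}{r-1}\sum_{k=j_0+1}^{p-1}k\binom{k-1}{j_0}\pmod{p^2}.
\]
The crux is thus to show that the sum $\sum_{k=j_0+1}^{p-1}k\binom{k-1}{j_0}$ is divisible by $p$: using $k\binom{k-1}{j_0}=(j_0+1)\binom{k}{j_0+1}$ together with the hockey-stick identity, this sum collapses to $(j_0+1)\binom{p}{j_0+2}$, and since $1\le j_0+2\le p-1$ the binomial $\binom{p}{j_0+2}$ is divisible by $p$. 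As the prefactor $\frac pu\binom{p-1}{r-1}$ already carries one factor of $p$, this forces $a_\ell\equiv 0\pmod{p^2}$, i.e. $\ord_p(a_\ell)\ge2$. The main obstacle is precisely this final cancellation: in this case each summand of $a_\ell$ is divisible by $p$ but in general not by $p^2$, so the whole argument hinges on the vanishing modulo $p$ of the combinatorial sum, which the hockey-stick collapse together with the bound $j_0\le p-3$ delivers.
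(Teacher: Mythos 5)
Your proof is correct, and its skeleton coincides with the paper's: the same case split on $\ord_p(\ell)$, with $\ord_p(\ell)\le m-3$ handled purely by the valuation of the prefactor $kp^{m-1}/\ell$, and $\ord_p(\ell)=m-2$ handled by a mod-$p^2$ evaluation of each summand followed by the rewriting $k\binom{k-1}{j_0}=(j_0+1)\binom{k}{j_0+1}$, the hockey-stick identity, and the bound $j_0+2\le p-1$ forcing $p\mid\binom{p}{j_0+2}$; the paper's $r$ and $s$ are exactly your $u$ and $j_0$. The one genuine difference is the middle step. To get each summand modulo $p^2$, the paper expands $\binom{kp^{m-1}}{\ell}=\frac{kp^{m-1}}{\ell}\prod_{j=1}^{\ell-1}\bigl(\frac{kp^{m-1}}{j}-1\bigr)$ and evaluates the product modulo $p$ by splitting the factors according to $\ord_p(j)$ (factors with $\ord_p(j)\le m-2$ contribute signs, those with $\ord_p(j)=m-1$ contribute $\binom{k-1}{s}$), whereas you factor off the integer $\binom{kp^{m-1}-1}{\ell-1}$ and read its residue directly from Lucas' theorem via the base-$p$ digits of $kp^{m-1}-1$ and $\ell-1$. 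Your route is cleaner: it avoids the paper's delicate bookkeeping about which factors get absorbed into $p^2\Zp$, and it identifies the paper's sign $(-1)^{\ell-s-1}$ concretely as the unit $\binom{p-1}{r-1}$. What the paper's hands-on expansion buys instead is uniformity of technique: essentially the same product manipulation is reused in its treatment of case (ii), where $\ell=tp^{m-1}$ and the coefficient $a_\ell$ must be pinned down modulo $p^2$ exactly (to extract $\frac{(-1)^t}{t+1}p$), something plain Lucas, which only yields mod-$p$ information, cannot deliver.
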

\begin{proof}
Recall that $a_\ell=\sum_{k=\ceil[\big]{\frac{\ell}{p^{m-1}}}}^{p-1} \binom{kp^{m-1}}{\ell}$. For each $k$, we have
\begin{equation}\label{eq:expandbinom}
    \binom{kp^{m-1}}{\ell}=\frac{kp^{m-1}}{\ell}\cdot \prod_{j=1}^{\ell-1}\frac{kp^{m-1}-j}{j}=\frac{kp^{m-1}}{\ell}\cdot \prod_{j=1}^{\ell-1}\left(\frac{kp^{m-1}}{j}-1\right).
\end{equation}
Since $j\le\ell -1\le (p-2)p^{m-1}$ in the product, we have $\ord_p(j)\le m-1$.  In particular, the product above is in $\Zp$ and thus 
\[
\binom{kp^{m-1}}{\ell}\in \frac{p^{m-1}}{\ell}\Zp.
\]
In the case where $\ord_p(\ell)\le m-3$, the binomial coefficient above is in $p^2\Zp$ for all $k$. In particular, $a_\ell\in p^2\Zp$.

Let us now consider the case $\ord_p(\ell)=m-2$. Let us write $\ell =rp^{m-2}$, where $r$ is an integer coprime to $p$. Then we deduce from \eqref{eq:expandbinom} that
\begin{align*}
 \binom{kp^{m-1}}{\ell}&=\frac{kp}{r}\cdot \prod_{j=1}^{\ell-1}\left(\frac{kp^{m-1}}{j}-1\right)\\
 &=\frac{kp}{r}\cdot \prod_{\ord_p(j)\ge m-2}\left(\frac{kp^{m-1}}{j}-1\right)\prod_{\ord_p(j)<m-2}\left(-1\right)\mod p^2\Zp.
\end{align*}
Furthermore, all the terms $\frac{kp^{m-1}}{j}$ where $\ord_p(j)=m-2$ will be absorbed in $p^2\Zp$ when multiplied by $\frac{kp}{r}$. Thus,
\[
\binom{kp^{m-1}}{\ell}=\frac{kp}{r}\cdot \prod_{\ord_p(j)= m-1}\left(\frac{kp^{m-1}}{j}-1\right)\prod_{\ord_p(j)\le m-2}\left(-1\right)\mod p^2\Zp.
\]
Thus, if we write $s=\#\{j:1\le j\le \ell-1, \ord_p(j)=m-1\}$, then the product above becomes
\[
    \binom{kp^{m-1}}{\ell}=\frac{kp}{r}\cdot \prod_{j=1}^s\left(\frac{k}{j}-1\right)\left(-1\right)^{\ell-s-1}\mod p^2\Zp.
\]

Notice that 
\[
\ceil[\bigg]{\frac{\ell}{p^{m-1}}}=\ceil[\bigg]{\frac{r}{p}}=s+1.
\]
We can therefore deduce that
\begin{align*}
    a_\ell&=\sum_{k=s+1}^{p-1}\frac{kp}{r}\cdot \prod_{j=1}^s\left(\frac{k}{j}-1\right)\left(-1\right)^{\ell-s-1}\mod p^2\Zp\\
    &=\frac{\left(-1\right)^{\ell-s-1}p}{r}\sum_{k=s+1}^{p-1}k\prod_{j=1}^s\left(\frac{k}{j}-1\right)\mod p^2\Zp\\
    &=\frac{\left(-1\right)^{\ell-s-1}p}{r}\sum_{k=s+1}^{p-1}(s+1)\prod_{j=1}^{s+1}\frac{k-j+1}{j}\mod p^2\Zp\\
    &=\frac{\left(-1\right)^{\ell-s-1}p(s+1)}{r}\sum_{k=s+1}^{p-1}\binom{k}{s+1}\mod p^2\Zp\\
    &=\frac{\left(-1\right)^{\ell-s-1}p(s+1)}{r}\binom{p}{s+2}\mod p^2\Zp
\end{align*}
by the hockey stick identity. But our upper bound on $\ell$ gives $s+2<p$, which tells us that $\binom{p}{s+2}\equiv 0\mod p$. Since $p\nmid r$, the result now follows.
\end{proof}

\begin{cor}\label{cor:killlwithsmallord}
Suppose that $1\le \ell< (p-2)p^{m-1} $ with $\ord_p(\ell)<m-1$. Then $a_\ell\pi^{\ell-\phi(p^m)}=O(\pi^d)$.
\end{cor}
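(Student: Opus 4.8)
The plan is to reduce everything to the valuation estimate supplied by Lemma~\ref{lem:lwithsmallord} and then compare exponents. The hypotheses of the corollary are precisely the hypotheses of that lemma, so I may invoke it immediately to obtain $\ord_p(a_\ell)\ge 2$. Translating this into the normalized $\pi$-adic valuation $v$ and using $v(p)=\phi(p^m)$, it reads $v(a_\ell)\ge 2\phi(p^m)$.

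Next I would compute the valuation of the term in question directly:
\[
v\left(a_\ell\pi^{\ell-\phi(p^m)}\right)=v(a_\ell)+\ell-\phi(p^m)\ge 2\phi(p^m)+\ell-\phi(p^m)=\phi(p^m)+\ell.
\]
Since $\ell\ge 1$, this is at least $\phi(p^m)+1=(p-1)p^{m-1}+1$.

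It then remains to check the numerical inequality $(p-1)p^{m-1}+1\ge d=(p-2)p^{m-1}+p^{m-2}+1$. After subtracting common terms, this is equivalent to $p^{m-1}\ge p^{m-2}$, which holds (in fact strictly) for every $m\ge 2$. Hence $v(a_\ell\pi^{\ell-\phi(p^m)})\ge d$; and since $a_\ell\pi^{\ell-\phi(p^m)}$ lies in the ring of integers $\Zp[\pi]$ of $\Qp(\zeta_{p^m})$, having valuation at least $d$ forces it into $\pi^d\Zp[\pi]$, i.e. it is $O(\pi^d)$, as desired.

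The only genuine content beyond quoting the lemma is the inequality $\phi(p^m)+1\ge d$; everything else is routine valuation bookkeeping, so I anticipate no real obstacle. All the difficulty has already been absorbed into Lemma~\ref{lem:lwithsmallord}, whose combinatorial argument via the hockey stick identity produced the crucial improvement $\ord_p(a_\ell)\ge 2$ rather than merely $\ord_p(a_\ell)\ge 1$; it is exactly this extra factor of $p$ that makes the term negligible modulo $\pi^d$.
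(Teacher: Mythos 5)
Your proposal is correct and follows the same route as the paper: invoke Lemma~\ref{lem:lwithsmallord} to get $v(a_\ell)\ge 2\phi(p^m)$, add the exponent $\ell-\phi(p^m)$, and check the resulting bound exceeds $d$. The paper compresses the final comparison to the chain $v(a_\ell\pi^{\ell-\phi(p^m)})>\phi(p^m)>d$, whereas you verify $\phi(p^m)+1\ge d$ explicitly; the content is identical.
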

\begin{proof}
Lemma~\ref{lem:lwithsmallord} tells us that $v(a_\ell)\ge 2\phi(p^m)$.
Thus, $$v(a_\ell\pi^{\ell-\phi(p^m)})> \phi(p^m)>d$$
and the result follows.
\end{proof}

We now study the case (ii).

\begin{lem}
Let $\ell=tp^{m-1}$, where $t=1,2,\ldots, p-2$. Then
\[
a_{\ell}\in \frac{(-1)^t}{t+1}p+p^2\Zp.
\]
\end{lem}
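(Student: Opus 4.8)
The plan is to determine $a_\ell$ modulo $p^2$ by first reducing each summand $\binom{kp^{m-1}}{tp^{m-1}}$ and then summing over $k$. Since $\ell=tp^{m-1}$, the lower limit of the defining sum is $\lceil\ell/p^{m-1}\rceil=t$, so $a_\ell=\sum_{k=t}^{p-1}\binom{kp^{m-1}}{tp^{m-1}}$. Specializing the factorization \eqref{eq:expandbinom} to $\ell=tp^{m-1}$ turns $\frac{kp^{m-1}}{\ell}$ into $\frac{k}{t}$, giving $\binom{kp^{m-1}}{tp^{m-1}}=\frac{k}{t}\prod_{j=1}^{tp^{m-1}-1}\left(\frac{kp^{m-1}}{j}-1\right)$, and I would split this product according to whether $\ord_p(j)=m-1$ or $\ord_p(j)\le m-2$.

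First I would isolate the factors with $\ord_p(j)=m-1$, namely $j=ip^{m-1}$ for $i=1,\ldots,t-1$, whose contribution is $\prod_{i=1}^{t-1}\left(\frac{k}{i}-1\right)=\frac{(k-1)\cdots(k-t+1)}{(t-1)!}$; multiplied by the prefactor $\frac{k}{t}$ this combines into $\binom{k}{t}$. Writing $P_k$ for the remaining product over those $j$ with $\ord_p(j)\le m-2$, I obtain $\binom{kp^{m-1}}{tp^{m-1}}=\binom{k}{t}\,P_k$, where $\binom{k}{t}$ is a $p$-adic unit since $k\le p-1$.

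The heart of the argument, and the step I expect to be the main obstacle, is to show $P_k\equiv 1\pmod{p^2}$. Each factor has the shape $-1+\frac{kp^{m-1}}{j}$ with $\ord_p\!\left(\frac{kp^{m-1}}{j}\right)=m-1-\ord_p(j)\ge 1$; factoring out the $-1$'s leaves an even number $t(p^{m-1}-1)$ of them (even because $p$ is odd), so $P_k\equiv\prod\left(1-\frac{kp^{m-1}}{j}\right)\pmod{p^2}$. As all cross terms have valuation $\ge 2$, this product collapses modulo $p^2$ to $1-kp^{m-1}\sum_{\ord_p(j)\le m-2}\frac{1}{j}$. Only the indices with $\ord_p(j)=m-2$ survive, i.e. $j=ip^{m-2}$ with $p\nmid i$ and $1\le i\le tp-1$, reducing the sum to $kp\sum_{\substack{1\le i\le tp-1\\ p\nmid i}}\frac{1}{i}$. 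The final ingredient is the vanishing of this harmonic-type sum modulo $p$: each nonzero residue class occurs exactly $t$ times among $1,\ldots,tp-1$, so the sum is congruent to $t\sum_{r=1}^{p-1}r^{-1}\equiv 0\pmod p$, which forces $P_k\equiv 1\pmod{p^2}$.

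Granting this, $\binom{kp^{m-1}}{tp^{m-1}}\equiv\binom{k}{t}\pmod{p^2}$, and summing over $k$ by the hockey stick identity yields $a_\ell\equiv\sum_{k=t}^{p-1}\binom{k}{t}=\binom{p}{t+1}\pmod{p^2}$. To finish I would write $\binom{p}{t+1}=\frac{p}{t+1}\binom{p-1}{t}$; since $t\le p-2$ the factor $\frac{1}{t+1}$ lies in $\Zp$, and $\binom{p-1}{t}\equiv(-1)^t\pmod p$, whence $\binom{p}{t+1}\in\frac{(-1)^t}{t+1}p+p^2\Zp$, which is the claimed membership.
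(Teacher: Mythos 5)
Your proof is correct and takes essentially the same route as the paper's: the same factorization of $\binom{kp^{m-1}}{tp^{m-1}}$ into $\binom{k}{t}$ times a product over the $j$ with $\ord_p(j)\le m-2$, the same reduction of that product modulo $p^2$ to $1-kp\sum_{p\nmid i,\,1\le i\le tp-1}\frac{1}{i}$ with the harmonic-type sum vanishing modulo $p$, and the same finish via the hockey stick identity and the evaluation of $\binom{p}{t+1}$ modulo $p^2$. The only difference is cosmetic: you make the parity bookkeeping of the factored-out $-1$'s explicit, which the paper handles implicitly via the sign $(-1)^{\ell-tp}$.
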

\begin{proof}
As in the proof of Lemma~\ref{lem:lwithsmallord}, we consider 
\[
\binom{kp^{m-1}}{tp^{m-1}}=\frac{kp^{m-1}}{tp^{m-1}}\cdot \prod_{j=1}^{tp^{m-1}-1}\frac{kp^{m-1}-j}{j}=\frac{k}{t}\cdot \prod_{j=1}^{tp^{m-1}-1}\left(\frac{kp^{m-1}}{j}-1\right).
\]
In order to understand this modulo $p^2$, we split the product into:
\begin{align*}
    &\ \frac{k}{t}\prod_{\ord_p(j)=m-1}\left(\frac{kp^{m-1}}{j}-1\right) \prod_{\ord_p(j)=m-2}\left(\frac{kp^{m-1}}{j}-1\right)&\\
    &\   \prod_{\ord_p(j)<m-2}\left(\frac{kp^{m-1}}{j}-1\right)&\\
    =&\ \frac{k}{t}\prod_{j=1}^{t-1}\frac{k-j}{j}\prod_{1\le j\le tp-1,p\nmid j}\left(\frac{kp}{j}-1\right)(-1)^{\ell-tp} \mod p^2\Zp\\
    =&\ \frac{k}{t}\prod_{j=1}^{t-1}\frac{k-j}{j}\left(1-{kpC}\right) \mod p^2\Zp\\
    =&\ \binom{k}{t}(1-{kpC}) \mod p^2\Zp,
\end{align*}
where $C=\sum_{1\le j\le tp-1,p\nmid j}\frac{1}{j}$.
{We} have
\begin{align*}
    C &= \sum_{j=1,p \nmid j}^{tp-1}\frac{1}{j} = \sum_{i=0}^{t-1}\sum_{j=1}^{p-1}\frac{1}{ip+j} = \sum_{i=0}^{t-1}\sum_{j=1}^{p-1}\frac{1}{j}\mod p\Zp\\
    &= \sum_{i=0}^{t-1}\sum_{j=1}^{p-1}j\mod p\Zp \\ &= t\frac{(p-1)p}{2}=0\mod p\Zp.
\end{align*}
Thus, using the hockey stick identity again, we deduce that
\begin{align*}
    a_\ell&=\sum_{k=t}^{p-1}\binom{k}{t}\mod p^2\Zp\\
    &=\binom{p}{t+1}\mod p^2\Zp\\
    &=p\frac{(p-1)\dots(p-t)}{(t+1)\cdot t \cdot \ldots \cdot 1}\mod p^2\Zp\\
    &= \frac{(-1)^t p}{t+1}\mod p^2\Zp,
\end{align*}
as required.
\end{proof}

\begin{cor}\label{cor:termslargeval}
Let $\ell=tp^{m-1}$, where $t=1,2,\ldots, p-2$. Then
\[
a_{\ell}\pi^{\ell-\phi(p^m)}= \frac{(-1)^t}{t+1}p\pi^{(t-p+1)p^{m-1}}+O(\pi^d).
\]
\end{cor}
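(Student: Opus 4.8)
The plan is to combine the preceding lemma with a single valuation estimate, exactly as in the proof of Corollary~\ref{cor:killlwithsmallord}. The preceding lemma gives $a_\ell = \frac{(-1)^t}{t+1}p + p^2 z$ for some $z \in \Zp$; here I would first note that since $1 \le t \le p-2$ we have $2 \le t+1 \le p-1$, so $t+1$ is coprime to $p$ and $\frac{1}{t+1}$ is a $p$-adic unit. Next I would rewrite the exponent using $\phi(p^m) = (p-1)p^{m-1}$, which gives
\[
\ell - \phi(p^m) = tp^{m-1} - (p-1)p^{m-1} = (t-p+1)p^{m-1},
\]
matching the power of $\pi$ appearing in the statement.

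Multiplying the expression for $a_\ell$ by $\pi^{\ell-\phi(p^m)}$ then yields
\[
a_\ell \pi^{\ell-\phi(p^m)} = \frac{(-1)^t}{t+1}p\,\pi^{(t-p+1)p^{m-1}} + p^2 z\,\pi^{(t-p+1)p^{m-1}},
\]
and it remains only to show the second summand is $O(\pi^d)$. I would argue purely with the normalized valuation $v$: since $v(p) = \phi(p^m) = (p-1)p^{m-1}$ and $v(z) \ge 0$, we have
\[
v\!\left(p^2 z\,\pi^{(t-p+1)p^{m-1}}\right) \ge 2(p-1)p^{m-1} + (t-p+1)p^{m-1} = (p-1+t)p^{m-1}.
\]

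Finally I would verify $(p-1+t)p^{m-1} \ge d$. Subtracting, $(p-1+t)p^{m-1} - \bigl((p-2)p^{m-1}+p^{m-2}+1\bigr) = (t+1)p^{m-1} - p^{m-2} - 1$, and since $t \ge 1$ and $p \ge 3$ we have $(t+1)p^{m-1} \ge 2p^{m-1} = 2p\cdot p^{m-2} \ge 6p^{m-2} > p^{m-2}+1$, so this difference is positive. Hence the error term has valuation strictly greater than $d$, so it is $O(\pi^d)$ and the claim follows. I do not expect any genuine obstacle here: the real work is already done in the preceding lemma, and the only points that require care are that $\frac{1}{t+1}$ is a $p$-adic unit and that the inequality $(p-1+t)p^{m-1} \ge d$ holds uniformly for all admissible $t$ and all $m \ge 2$.
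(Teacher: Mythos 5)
Your proposal is correct and follows essentially the same route as the paper: the paper's proof simply invokes the argument of Corollary~\ref{cor:killlwithsmallord}, i.e.\ that every term of $\pi^{\ell-\phi(p^m)}p^2\Zp$ can be absorbed into $O(\pi^d)$, which is exactly the valuation estimate you carry out explicitly. Your added details (that $t+1$ is a $p$-adic unit, the exponent rewriting, and the check $(p-1+t)p^{m-1} > d$) are accurate fillings-in of what the paper leaves implicit.
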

\begin{proof}
This follows from the same proof as Corollary~\ref{cor:killlwithsmallord}. That is, we can eliminate all terms in $\pi^{\ell-\phi(p^m)}p^2\Zp$ modulo $\pi^d$.
\end{proof}

We now turn our attention to case (iii).

\begin{lem}\label{lem:bigterms}
If $(p-2)p^{m-1}< \ell <d-1$, then $\ord_p(a_\ell)\ge 2$.
\end{lem}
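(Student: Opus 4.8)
The plan is to first observe that for $\ell$ in the stated range the sum defining $a_\ell$ collapses to a single term. Since $(p-2)p^{m-1} < \ell < d-1 = (p-2)p^{m-1} + p^{m-2}$ and $p^{m-2} < p^{m-1}$, we have $p-2 < \ell/p^{m-1} < p-1$, so that $\lceil \ell/p^{m-1}\rceil = p-1$ and therefore $a_\ell = \binom{(p-1)p^{m-1}}{\ell}$. (Note this forces $m \ge 3$; for $m=2$ the range is empty.) It then remains to show that $\ord_p\binom{(p-1)p^{m-1}}{\ell} \ge 2$.

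Write $\ell = (p-2)p^{m-1} + r$ with $1 \le r \le p^{m-2}-1$. I would compute the valuation by Kummer's theorem, which identifies $\ord_p\binom{(p-1)p^{m-1}}{\ell}$ with the number of carries occurring when $\ell$ and $(p-1)p^{m-1} - \ell = p^{m-1} - r$ are added in base $p$. The base-$p$ expansions are transparent: indexing digits by their power of $p$, the number $\ell$ has digit $p-2$ at $p^{m-1}$, digit $0$ at $p^{m-2}$, and the digits of $r$ below $p^{m-2}$; while $p^{m-1} - r = (p-1)p^{m-2} + (p^{m-2} - r)$ has digit $0$ at $p^{m-1}$, digit $p-1$ at $p^{m-2}$, and the digits of $p^{m-2} - r$ below.

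Counting carries then proceeds from the bottom up. Adding the low parts $r$ and $p^{m-2} - r$ produces exactly $p^{m-2}$, and by the standard identity $\ord_p\binom{p^{m-2}}{r} = (m-2) - \ord_p(r)$ this contributes $(m-2) - \ord_p(r)$ carries, the last of which carries a $1$ into the $p^{m-2}$ place. At the $p^{m-2}$ place one then adds $0 + (p-1) + 1 = p$, giving one further carry into the $p^{m-1}$ place; and at the $p^{m-1}$ place one adds $(p-2) + 0 + 1 = p-1$, with no further carry. Hence the total number of carries is $m-1 - \ord_p(r)$, so $\ord_p(a_\ell) = m-1 - \ord_p(r)$. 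Because $1 \le r < p^{m-2}$ forces $\ord_p(r) \le m-3$, this yields $\ord_p(a_\ell) \ge 2$, as desired.

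The single delicate point — and the only real obstacle — is the carry bookkeeping at the two top places $p^{m-2}$ and $p^{m-1}$, together with the observation that the constraint $r < p^{m-2}$ is exactly what forces $\ord_p(r) \le m-3$ and hence upgrades the bound from $\ge 1$ to $\ge 2$. One could avoid invoking Kummer's theorem altogether and instead apply Legendre's formula $\ord_p(n!) = (n - s_p(n))/(p-1)$, where $s_p$ denotes the base-$p$ digit sum, to $\binom{(p-1)p^{m-1}}{\ell}$; the identical digit computation again produces $\ord_p(a_\ell) = m-1 - \ord_p(r)$.
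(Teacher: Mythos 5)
Your proof is correct, and it reaches the conclusion by a genuinely different method than the paper, even though both start from the same reduction: the bounds $(p-2)p^{m-1}<\ell<(p-2)p^{m-1}+p^{m-2}$ force $\lceil \ell/p^{m-1}\rceil=p-1$, so the sum defining $a_\ell$ collapses to the single binomial coefficient $\binom{(p-1)p^{m-1}}{\ell}$. From there the paper stays with the factorization technique it uses throughout Section~\ref{sec:pre}: it writes $\binom{(p-1)p^{m-1}}{\ell}=\frac{(p-1)p^{m-1}}{\ell}\prod_{j=1}^{\ell-1}\bigl(\frac{(p-1)p^{m-1}}{j}-1\bigr)$, notes that every factor of the product lies in $\Zp$ (since $\ord_p(j)\le m-1$), and concludes from $\ord_p(\ell)\le m-3$ that the prefactor $\frac{(p-1)p^{m-1}}{\ell}$, and hence $a_\ell$, lies in $p^2\Zp$. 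You instead invoke Kummer's theorem and count carries in the base-$p$ addition of $\ell=(p-2)p^{m-1}+r$ and $p^{m-1}-r$; your bookkeeping is right — the low parts contribute $(m-2)-\ord_p(r)$ carries via the identity $\ord_p\binom{p^{m-2}}{r}=(m-2)-\ord_p(r)$, one further carry occurs at the $p^{m-2}$ place, and none at the top — and it rests on exactly the same arithmetic point as the paper's proof, namely that $r<p^{m-2}$ forces $\ord_p(\ell)=\ord_p(r)\le m-3$. What your route buys is the exact value $\ord_p(a_\ell)=m-1-\ord_p(r)$ rather than a lower bound (the two answers agree: the paper's product of terms $\frac{(p-1)p^{m-1}}{j}-1$ is in fact a $p$-adic unit, though the paper never needs this), together with the tidy observation that the lemma is vacuous when $m=2$, which the paper leaves implicit. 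What the paper's route buys is uniformity and self-containedness: the same elementary manipulation of binomial coefficients powers Lemma~\ref{lem:lwithsmallord} and Proposition~\ref{prop:d-1}, so no external result such as Kummer's or Legendre's theorem needs to be imported.
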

\begin{proof}
Since $\ceil[\big]{\frac{\ell}{p^{m-1}}}=p-1$, we have
\[
a_\ell=\binom{(p-1)p^{m-1}}{\ell}=\frac{(p-1)p^{m-1}}{\ell}\prod_{j=1}^{\ell-1}\left(\frac{(p-1)p^{m-1}}{j}-1\right).
\]
All the terms in the product are in $\Zp$ as $\ord_p(j)\le m-1$. Furthermore, since
\[
(p-2)p^{m-1}<\ell <(p-2)p^{m-1}+p^{m-2},
\]
we have $\ord_p(\ell)<m-2$, which tells us that 
\[
\frac{(p-1)p^{m-1}}{\ell}\in p^2\Zp.
\]
Thus, $a_\ell\in p^2\Zp$ as required.
\end{proof}

\begin{cor}\label{cor:mediumterms}
If $(p-2)p^{m-1}< \ell <d-1$, then $a_\ell\pi^{\ell-\phi(p^m)}=O(\pi^d)$.
\end{cor}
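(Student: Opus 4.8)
The plan is to reduce everything to the $\ord_p$-estimate already obtained in Lemma~\ref{lem:bigterms} and then convert it into a $\pi$-adic valuation bound, exactly as was done for case (i) in Corollary~\ref{cor:killlwithsmallord}. First I would invoke Lemma~\ref{lem:bigterms}, which under the hypothesis $(p-2)p^{m-1}<\ell<d-1$ gives $\ord_p(a_\ell)\ge 2$. Since $a_\ell\in\Zp$ and the normalized $\pi$-adic valuation satisfies $v(p)=\phi(p^m)$, this immediately yields $v(a_\ell)=\ord_p(a_\ell)\,\phi(p^m)\ge 2\phi(p^m)$.

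Next I would compute the valuation of the full term:
\[
v\bigl(a_\ell\pi^{\ell-\phi(p^m)}\bigr)=v(a_\ell)+\ell-\phi(p^m)\ge 2\phi(p^m)+\ell-\phi(p^m)=\phi(p^m)+\ell.
\]
It then remains to check that $\phi(p^m)+\ell\ge d$, and in fact the cruder bound $\phi(p^m)>d$ already suffices. Writing $\phi(p^m)=(p-1)p^{m-1}$ and recalling $d=(p-2)p^{m-1}+p^{m-2}+1$, one gets $\phi(p^m)-d=p^{m-2}(p-1)-1\ge 1$ for every $m\ge 2$ and every odd prime $p$. Hence $v(a_\ell\pi^{\ell-\phi(p^m)})\ge\phi(p^m)+\ell>\phi(p^m)>d$, which is precisely the assertion $a_\ell\pi^{\ell-\phi(p^m)}=O(\pi^d)$.

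Unlike the more delicate binomial manipulation that goes into Lemma~\ref{lem:bigterms} itself, I do not expect any genuine obstacle at this stage: the content of the corollary is a one-line valuation comparison driven by the same mechanism as Corollary~\ref{cor:killlwithsmallord}. The only point worth flagging is that one must confirm $\phi(p^m)$ genuinely exceeds $d$, so that the coarse estimate $\ord_p(a_\ell)\ge 2$ is already strong enough to push the term past $\pi^d$; this holds for all $m\ge 2$, which is why the argument is essentially identical across cases (i) and (iii).
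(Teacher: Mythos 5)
Your proposal is correct and follows exactly the paper's argument: the paper proves this corollary by citing the same valuation computation used for Corollary~\ref{cor:killlwithsmallord}, namely that Lemma~\ref{lem:bigterms} gives $v(a_\ell)\ge 2\phi(p^m)$, hence $v(a_\ell\pi^{\ell-\phi(p^m)})>\phi(p^m)>d$. Your explicit verification that $\phi(p^m)-d=p^{m-2}(p-1)-1\ge 1$ is a detail the paper leaves implicit, but it is the same one-line mechanism.
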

\begin{proof}
This follows from the same proof as Corollary~\ref{cor:killlwithsmallord}.
\end{proof}

Finally, we study case (iv), which will play a key role in our algorithm of finding explicit uniformizers.

\begin{prop}\label{prop:d-1}
We have $a_{d-1}\pi^{d-1-\phi(p^m)}=\pi^{d-1}+O(\pi^{d})$.
\end{prop}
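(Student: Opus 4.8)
The plan is to reduce the statement to a single congruence for $a_{d-1}$ modulo $p^2$ and then transport that congruence into the $\pi$-adic expansion. First I would identify $a_{d-1}$ explicitly. Since $\frac{d-1}{p^{m-1}}=(p-2)+\frac1p$, we have $\ceil{\frac{d-1}{p^{m-1}}}=p-1$, so (exactly as in Lemma~\ref{lem:bigterms}) the defining sum collapses to the single term
\[
a_{d-1}=\binom{(p-1)p^{m-1}}{d-1}.
\]
I would also record the two arithmetic identities that drive everything: $d-1=(p-1)^2p^{m-2}=\phi(p^m)-\phi(p^{m-1})$, so that $\ord_p(d-1)=m-2$, and consequently $(d-1)-\phi(p^m)=-\phi(p^{m-1})$, which pins down the exponent of $\pi$ occurring in the statement.

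The heart of the argument is the claim $a_{d-1}\equiv -p \pmod{p^2}$, which I would prove by the same product manipulation used in Lemmas~\ref{lem:lwithsmallord} and~\ref{lem:bigterms}. Writing
\[
a_{d-1}=\frac{(p-1)p^{m-1}}{d-1}\prod_{j=1}^{d-2}\left(\frac{(p-1)p^{m-1}}{j}-1\right)=\frac{p}{p-1}\prod_{j=1}^{d-2}\left(\frac{(p-1)p^{m-1}}{j}-1\right),
\]
the prefactor $\frac{p}{p-1}$ already carries the single factor of $p$ (all factors of the product lie in $\Zp$ since $\ord_p(j)\le m-1$ for $j\le d-2$), so it suffices to evaluate the product modulo $p$. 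A factor is $\equiv -1\pmod p$ whenever $\ord_p(j)<m-1$, while for the $p-2$ indices $j=tp^{m-1}$, $t=1,\dots,p-2$, it equals $\frac{p-1-t}{t}$. Reindexing gives the cancellation $\prod_{t=1}^{p-2}\frac{p-1-t}{t}=\frac{(p-2)!}{(p-2)!}=1$, so the product is $\equiv(-1)^{d-p}\pmod p$, the exponent being $\#\{j\le d-2:\ \ord_p(j)<m-1\}=(d-2)-(p-2)=d-p$. Since $p$ is odd and $d=(p-2)p^{m-1}+p^{m-2}+1$ is odd, $d-p$ is even, so the product is $\equiv 1\pmod p$. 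Hence $\frac{a_{d-1}}{p}\equiv\frac{1}{p-1}\equiv -1\pmod p$, i.e. $a_{d-1}\equiv -p\pmod{p^2}$.

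Finally I would feed this back into the $\pi$-adic world. Evaluating $f(\pi)=0$ gives $-p=\pi^{\phi(p^m)}\epsilon$ with $\epsilon\equiv 1\pmod\pi$, since every other term of $f$ has positive valuation. Writing $a_{d-1}=-p+p^2b$ with $b\in\Zp$ and using $(d-1)-\phi(p^m)=-\phi(p^{m-1})$,
\[
a_{d-1}\pi^{d-1-\phi(p^m)}=\epsilon\,\pi^{\phi(p^m)-\phi(p^{m-1})}+p^2b\,\pi^{-\phi(p^{m-1})}=\pi^{d-1}+(\epsilon-1)\pi^{d-1}+O(\pi^{d}),
\]
where $(\epsilon-1)\pi^{d-1}=O(\pi^d)$ because $\epsilon-1=O(\pi)$, and the $p^2$-term has $\pi$-valuation $2\phi(p^m)-\phi(p^{m-1})=\phi(p^m)+(d-1)>d$. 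This gives the desired equality.

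I expect the main obstacle to be the mod-$p^2$ evaluation of $a_{d-1}$, and specifically pinning down the exact constant $-1$ rather than merely the valuation. The two delicate points are the bookkeeping of the sign $(-1)^{d-p}$, which forces the parity check that $d$ is odd, and spotting the telescoping cancellation $\prod_{t=1}^{p-2}\frac{p-1-t}{t}=1$; the remaining valuation estimates in the last paragraph are routine.
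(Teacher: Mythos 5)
Your proof is correct, and it follows the same skeleton as the paper's: collapse $a_{d-1}$ to the single binomial coefficient $\binom{(p-1)p^{m-1}}{d-1}$, factor it as $\frac{(p-1)p^{m-1}}{d-1}$ times $\prod_{j=1}^{d-2}\left(\frac{(p-1)p^{m-1}}{j}-1\right)$, establish the key congruence $a_{d-1}\equiv -p \pmod{p^2}$, and finish by substituting $\frac{-p}{\pi^{\phi(p^m)}}=1+O(\pi)$ from \eqref{eq:expand}. The interesting differences are in the middle step. To evaluate the product modulo $p$, the paper rewrites $\frac{p-1}{i}-1\equiv -\left(\frac{1}{i}+1\right)$, reindexes by $i\mapsto i^{-1}$ (using that $\{1,\dots,p-2\}$ is stable under inversion mod $p$), and invokes Wilson's theorem to get $-(p-1)!\equiv 1$. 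You instead keep the exact values $\frac{p-1-t}{t}$ and observe that their product is exactly $1$, since numerator and denominator are each $(p-2)!$; this is more elementary and dispenses with Wilson's theorem entirely. You are also more careful on one point the paper glosses over: the factors with $\ord_p(j)<m-1$ each contribute $-1$ mod $p$, and their total contribution is $(-1)^{d-p}$; you explicitly count them and check that $d-p$ is even (using that $p$ is odd and $d$ is odd), whereas the paper's displayed computation silently absorbs this sign in its first line. Finally, your identity $d-1=(p-1)^2p^{m-2}$ shows the prefactor is exactly $\frac{p}{p-1}$, which streamlines the paper's mod-$p^2$ reduction of $\frac{(p-1)p}{(p-2)p+1}$ and also cleanly pins down the exponent via $d-1-\phi(p^m)=-\phi(p^{m-1})$. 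Net effect: same strategy, but your version is slightly more self-contained and fills in a sign verification the paper leaves implicit.
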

\begin{proof}
As in the proof of Lemma~\ref{lem:bigterms}, we have $\ceil[\big]{\frac{d-1}{p^{m-1}}}=p-1$. Thus,
\begin{equation}\label{eq:expandd-1}
a_{d-1}=\binom{(p-1)p^{m-1}}{d-1}=\frac{(p-1)p^{m-1}}{d-1}\prod_{j=1}^{d-2}\left(\frac{(p-1)p^{m-1}}{j}-1\right).
\end{equation}
Note that
\begin{equation}\label{eq:1stproduct}
\frac{(p-1)p^{m-1}}{d-1}=\frac{(p-1)p^{m-1}}{(p-2)p^{m-1}+p^{m-2}}=\frac{(p-1)p}{(p-2)p+1}{= -p\mod p^2\Zp}.
\end{equation}

It remains to study the product in \eqref{eq:expandd-1}. Modulo $p$, the only terms that are not $-1$ are exactly when $j=ip^{m-1}$, $i=1,2,\ldots,p-2$. {We have
\begin{align*}
\prod_{j=1}^{d-2}\left(\frac{(p-1)p^{m-1}}{j}-1\right)&=\prod_{i=1}^{p-2}\left(\frac{p-1}{i}-1\right)\mod p\Zp\\
&=-\prod_{i=1}^{p-2}\left(\frac{1}{i}+1\right)\mod p\Zp\\
&=-\prod_{i=1}^{p-2}\left(i+1\right)\mod p\Zp\\
&=-(p-1)!\mod p\Zp\\
&=1\mod p\Zp
\end{align*}}
by Wilson's theorem. On multiplying this with \eqref{eq:1stproduct}, we deduce that
\[
a_{d-1}{=-p\mod p^2\Zp.}
\]

We can now conclude that
\[
a_{d-1}\pi^{d-1-\phi(p^m)}=\frac{-p}{\pi^{\phi(p^m)}}\pi^{d-1}+O(\pi^d)=\pi^{d-1}+O(\pi^d)
\]
because $\frac{-p}{\pi^{\phi(p^m)}}=1+O(\pi)$ by \eqref{eq:expand} as $v(a_\ell\pi^{-\phi(p^m)})\ge0$ for all $\ell$.
\end{proof}

\section{Explicit uniformizers}\label{sec:uniformizer}

We now explain how to construct an explicit uniformizer of $K_{2,1}$. On combining Lemma~\ref{lem:largeterms}, Corollaries~\ref{cor:killlwithsmallord},  \ref{cor:termslargeval}, \ref{cor:mediumterms} and Proposition~\ref{prop:d-1} and setting $m=2$, we deduce from \eqref{eq:expand} that
\begin{equation}\label{eq:newexpand}
      \frac{-p}{\pi^{\phi(p^2)}} = 1+\sum_{t=1}^{p-2} \frac{(-1)^t}{t+1}p\pi^{(t-p+1)p}+\pi^{(p-2)p+1}+O(\pi^{(p-2)p+2}).
\end{equation}
This now allows us to write down an explicit uniformizer of $K_{2,1}$:

\begin{thm}\label{thm:1}
The expression
\[
\pi^{-2p+3}\left(\sqrt[p]{p}+\pi^{p-1}+\sum_{t=1}^{p-2}\frac{(-1)^t}{t+1}{\sqrt[p]{p}}\pi^{t}\right)
\]
gives a uniformizer of $\Qp(\zeta_{p^2},\sqrt[p]{p})$.
\end{thm}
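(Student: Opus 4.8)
The plan is to verify that the displayed expression has $p$-adic valuation exactly $\frac{1}{[K_{2,1}:\Qp]}=\frac{1}{p(p-1)}$, which immediately forces it to be a uniformizer of the totally ramified extension $K_{2,1}$. The strategy mirrors Mercio's construction recalled in the introduction: we want to extract a $p$-th root. First I would set $m=2$ in \eqref{eq:newexpand} and multiply both sides by $\pi^{\phi(p^2)}=\pi^{p(p-1)}$, so that the relation $-p=\pi^{p(p-1)}+\sum_{t=1}^{p-2}\frac{(-1)^t}{t+1}p\pi^{tp}+\pi^{(p-2)p+1+p(p-1)}+O(\pi^{\cdots})$ can be rearranged. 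The key idea is that the right-hand side of \eqref{eq:newexpand}, after subtracting the leading $1$, is a sum whose dominant term (in the $\pi$-adic sense) we can match against a perfect $p$-th power.

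Concretely, I would consider the quantity
\[
u:=\sqrt[p]{p}+\pi^{p-1}+\sum_{t=1}^{p-2}\frac{(-1)^t}{t+1}\sqrt[p]{p}\,\pi^{t}
\]
appearing inside the parentheses, and compute its $p$-th power $u^p$ modulo a suitable power of $\pi$. The motivating observation is that $u\equiv \sqrt[p]{p}\bigl(1+\sum_{t=1}^{p-2}\frac{(-1)^t}{t+1}\pi^t\bigr)+\pi^{p-1}$, so that $u^p$ should recover $p$ times the bracketed polynomial raised to the $p$-th power, plus cross terms. The aim is to show $u^p$ equals (a unit multiple of) $\pi^{(2p-3)p}$ times $-p$, matching \eqref{eq:newexpand} after clearing $\pi^{\phi(p^2)}$; then $\pi^{-2p+3}u$ has valuation $\frac{1}{p}\ord_p(u^p)-(2p-3)\ord_p(\pi)$, which I would check equals $\frac{1}{p(p-1)}$.

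The main obstacle will be controlling the error terms when expanding $u^p$. Since $\ord_p(\sqrt[p]{p})=\frac{1}{p}$ and $\ord_p(\pi)=\frac{1}{p-1}$, the multinomial expansion of $u^p$ mixes these two incommensurable valuations, and I must track which cross terms survive modulo the relevant power of $\pi$ and which get absorbed into the $O(\pi^{(p-2)p+2})$ error coming from \eqref{eq:newexpand}. In particular, the terms $\binom{p}{j}(\sqrt[p]{p})^{p-j}(\cdots)^j$ with $1\le j\le p-1$ carry an extra factor of $p$ from the binomial coefficient, raising their valuation, while the pure $(\sqrt[p]{p})^p=p$ term and the pure $\pi^{(p-1)p}$ term are the genuine contributions. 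I expect the bracketed polynomial $1+\sum_{t=1}^{p-2}\frac{(-1)^t}{t+1}\pi^t$ to be engineered precisely so that $p\cdot\bigl(1+\sum_{t=1}^{p-2}\frac{(-1)^t}{t+1}\pi^t\bigr)^p$ reproduces the first two groups of terms on the right of \eqref{eq:newexpand}.

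Having established $u^p= -p\,\pi^{(2p-3)p}\cdot(\text{unit})+O(\pi^{\text{higher}})$, or equivalently $\ord_p(u)=\frac{1}{p}\ord_p\bigl(p\,\pi^{(2p-3)p}\bigr)=\frac{1}{p}\bigl(1+(2p-3)\cdot\frac{1}{p-1}\bigr)$, a direct arithmetic simplification gives $\ord_p\bigl(\pi^{-2p+3}u\bigr)=\frac{1}{p}+\frac{2p-3}{p(p-1)}-\frac{2p-3}{p-1}=\frac{1}{p(p-1)}$, confirming that the displayed element is a uniformizer. I would conclude by noting that any element of $K_{2,1}$ with valuation $\frac{1}{[K_{2,1}:\Qp]}$ is automatically a uniformizer, since the valuation group of a uniformizer generates $\frac{1}{[K_{2,1}:\Qp]}\ZZ$.
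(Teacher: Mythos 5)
Your overall plan---expand the $p$-th power of $u=\sqrt[p]{p}+\pi^{p-1}+\sum_{t=1}^{p-2}\frac{(-1)^t}{t+1}\sqrt[p]{p}\,\pi^t$, use the fact that all multinomial cross terms carry a factor of $p$, and compare the surviving pure $p$-th powers with \eqref{eq:newexpand}---is exactly the paper's strategy, but the targets you set yourself are wrong in ways that would sink the argument if carried out. First, $[K_{2,1}:\Qp]=p^2(p-1)$, not $p(p-1)$, so the valuation a uniformizer must have is $\frac{1}{p^2(p-1)}$. Second, and more seriously, your claim that ``$u^p$ equals a unit multiple of $-p\,\pi^{(2p-3)p}$'' is false and self-defeating: in the valuation $v$ normalized by $v(\pi)=1$ (so $v(p)=p(p-1)$), that element has valuation $p(p-1)+(2p-3)p=p(3p-4)$, which is divisible by $p$; then $v(u)=3p-4$ would be an integer, $v(\pi^{-2p+3}u)=p-1$ would equal $v(\sqrt[p]{p})$, and the expression could not be a uniformizer. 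The actual point of the construction is the opposite: the $p$-divisible terms \emph{cancel}. Multiplying \eqref{eq:newexpand} through by $\pi^{\phi(p^2)}$ gives $-p=\pi^{(p-1)p}+\sum_{t=1}^{p-2}\frac{(-1)^t}{t+1}p\pi^{tp}+\pi^{(2p-3)p+1}+O(\pi^{(2p-3)p+2})$, so the pure $p$-th powers in the expansion of $u^p$ (namely $p$, $\pi^{(p-1)p}$ and $\frac{(-1)^t}{(t+1)^p}p\pi^{tp}$, using $(t+1)^p\equiv t+1\bmod p$) sum to $-\pi^{(2p-3)p+1}+O(\pi^{(2p-3)p+2})$, while the cross terms lie in $p\,\pi^{(2p-3)p}\Zp[\pi]$ and hence have valuation at least $p(3p-4)>(2p-3)p+1$. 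Thus $u^p=-\pi^{(2p-3)p+1}+O(\pi^{(2p-3)p+2})$: the decisive feature is the exponent $(2p-3)p+1$, prime to $p$, which comes from the term $\pi^{(p-2)p+1}$ isolated in Proposition~\ref{prop:d-1}---a term your proposal never mentions. You instead single out $p$ and $\pi^{(p-1)p}$ as ``the genuine contributions,'' when these are precisely the terms designed to cancel.

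Your closing arithmetic compounds the problem: even granting your intermediate claim, with $\ord_p(\pi)=\frac{1}{p(p-1)}$ it would give $\ord_p(\pi^{-2p+3}u)=\frac1p$ (again the valuation of $\sqrt[p]{p}$), whereas the expression you actually wrote, $\frac{1}{p}+\frac{2p-3}{p(p-1)}-\frac{2p-3}{p-1}$, simplifies to $-\frac{2(p-2)}{p}$, which is negative and equals neither $\frac1p$, nor your stated target $\frac{1}{p(p-1)}$, nor the correct value. The repaired argument runs as in the paper: from $u^p=-\pi^{(2p-3)p+1}+O(\pi^{(2p-3)p+2})$ one gets $v(u)=(2p-3)+\frac1p$, hence $v(\pi^{-2p+3}u)=\frac1p$ and $\ord_p(\pi^{-2p+3}u)=\frac{1}{p^2(p-1)}=\frac{1}{[K_{2,1}:\Qp]}$, which (as you correctly note for totally ramified extensions) forces the element to be a uniformizer.
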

\begin{proof}
Note that
\[
\begin{split}
\left(\frac{-\sqrt[p]{p}}{\pi^{p-1}}-1-\sum_{t=1}^{p-2}\frac{(-1)^t}{t+1}{\sqrt[p]{p}}\pi^{t-p+1}\right)^p=- \frac{p}{\pi^{\phi(p^2)}} - 1- \sum_{t=1}^{p-2} \frac{(-1)^t}{t+1}p\pi^{(t-p+1)p}\\+O( \pi^{\phi(p^2)})
\end{split}
\]
since all the summands inside the parentheses on the left-hand side have non-negative valuations, which tells us that all  cross terms of the expansion are in {$p\Zp[\pi]=\pi^{\phi(p^2)}\Zp[\pi]$}. If we combine this with \eqref{eq:newexpand}, we deduce that
\[
\left(\frac{-\sqrt[p]{p}}{\pi^{p-1}}-1-\sum_{t=1}^{p-2}\frac{(-1)^t}{t+1}{\sqrt[p]{p}}\pi^{t-p+1}\right)^p=\pi^{(p-2)p+1}+O(\pi^{(p-2)p+1}).
\]
Thus, on dividing both sides by $\pi^{(p-2)p}$, we have
\[
-\left(\pi^{-2p+3}\left(\sqrt[p]{p}+\pi^{p-1}+\sum_{t=1}^{p-2}\frac{(-1)^t}{t+1}{\sqrt[p]{p}}\pi^{t}\right)\right)^p=\pi+O(\pi^{2}).
\]
Thus,
\[
v\left(\pi^{-2p+3}\left(\sqrt[p]{p}+\pi^{p-1}+\sum_{t=1}^{p-2}\frac{(-1)^t}{t+1}{\sqrt[p]{p}}\pi^{t}\right)\right)=\frac{1}{p}
\]
as required.
\end{proof}

The fact that the $\pi$-adic valuation of $\sqrt[p]{p}$ is  $p-1$ allows us to slightly modify the uniformizer in Theorem~\ref{thm:1} on removing the constant $\sqrt[p]{p}$ from the summands, resulting in a sum that can be expressed solely in terms of $\pi$. To do this, notice that \eqref{eq:newexpand} can be rewritten as
\beq\label{eq:newexpand2}
\frac{-p}{\pi^{\phi(p^2)}}=1+\sum_{t=1}^{p-2}\frac{(-1)^{t+1}}{t+1}\left( \frac{-p}{\pi^{\phi(p^2)}} \right)\pi^{tp} + \pi^{(p-2)p+1} + O(\pi^{(p-2)p+2}).
\eeq
Therefore, if we multiply both sides by $\pi^{(p-2)p}$, we get
\[
\frac{-p}{\pi^{\phi(p^2)}}\cdot\pi^{(p-2)p}=\pi^{(p-2)p}+O(\pi^{(p-2)p+2}),
\]
which allows us to replace the last term of the sum in \eqref{eq:newexpand2} by $\pi^{(p-2)p}$. We can then play the same game and multiply both sides of the new equation by $\pi^{(p-3)p}$ to remove the constant $p$ from the second last term of the sum. If we continue this way, we can proceed by a backward recurrence relation to remove the constant $p$ from  one summand at a time.

We now explain how this is done concretely. Let $b_t = \frac{(-1)^{t+1}}{t+1}$ and $A_t = \frac{-p}{\pi^{\phi(p^2)}}\pi^{tp}$, which allows to rewrite \eqref{eq:newexpand2} in a more compact form:
\beq\label{eq:compexpand}
\frac{-p}{\pi^{\phi(p^2)}}=1+\sum_{t=1}^{p-2}b_tA_t + \pi^{(p-2)p+1} + O(\pi^{(p-2)p+2}).
\eeq
Multiplying both sides of this equation by $\pi^{kp}$ ($1 \leq k \leq p-2$) and using the fact that $A_t \cdot \pi^{kp} = A_{t+k}$, we get
\begin{equation}\label{eq:relAt}
A_k = \frac{-p}{\pi^{\phi(p^2)}} \cdot \pi^{kp} = \pi^{kp}+\sum_{t=1}^{p-2-k}b_tA_{k+t} + O(\pi^{(p-2)p+2}),   
\end{equation}
which allows us to express $A_k$ in terms of $A_{k+1},\dots,A_{p-2}$. Therefore, starting from ${A_{p-2} \stackrel{(*)}{=} \pi^{(p-2)p}+O(\pi^{(p-2)p+2})}$, we can obtain all the other $A_k$ by a backward recurrence relation given by  the following lemma.

\begin{lem}\label{lem:formulaAt}
Define
\[
B_0 = 1, \quad B_n = \sum_{k=1}^n b_kB_{n-k}, \quad k=1,\dots,p-2.
\]
Then
\[
A_t = \sum_{k=0}^{p-2-t} B_k\pi^{(t+k)p} + O(\pi^{(p-2)p+2}).
\]
\end{lem}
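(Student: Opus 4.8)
The plan is to establish the formula by backward induction on $t$, driven by the recurrence \eqref{eq:relAt}, starting from the base value $A_{p-2} = \pi^{(p-2)p} + O(\pi^{(p-2)p+2})$. The base case $t = p-2$ is immediate, since the claimed formula then reads $A_{p-2} = B_0\pi^{(p-2)p} + O(\pi^{(p-2)p+2})$ and $B_0 = 1$ by definition.

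For the inductive step, I would fix $t < p-2$ and assume the formula already holds for every index exceeding $t$. Substituting the inductive hypothesis for each $A_{t+j}$ into \eqref{eq:relAt} yields
\[
A_t = \pi^{tp} + \sum_{j=1}^{p-2-t} b_j\left(\sum_{k=0}^{p-2-t-j} B_k\pi^{(t+j+k)p}\right) + O(\pi^{(p-2)p+2}).
\]
The crucial move is to reindex the double sum by $n = j+k$: for each fixed $n$ with $1 \le n \le p-2-t$, the coefficient of $\pi^{(t+n)p}$ is $\sum_{j=1}^{n} b_j B_{n-j}$, which is exactly $B_n$ by the defining convolution. Absorbing the leading term $\pi^{tp} = B_0\pi^{tp}$ then collapses the whole expression to $\sum_{k=0}^{p-2-t} B_k\pi^{(t+k)p} + O(\pi^{(p-2)p+2})$, completing the step. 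Conceptually, this is just the Cauchy-product identity behind $\sum_n B_n x^n = (1 - \sum_k b_k x^k)^{-1}$, which is why the convolution defining $B_n$ is the natural one here.

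Two routine points need checking. First, the index ranges must line up so that the substitution $n = j+k$ sweeps out precisely the pairs with $j \ge 1$, $k \ge 0$, and $j+k \le p-2-t$; this is a direct verification. Second, I must confirm that the error stays at $O(\pi^{(p-2)p+2})$ after multiplying by the $b_j$ and summing. This holds because each $b_j = \frac{(-1)^{j+1}}{j+1}$ has $j+1 \le p-1$ coprime to $p$, hence is a $p$-adic unit of valuation $0$, so it cannot lower the valuation of the error term, and only finitely many terms are added.

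The only non-mechanical step is recognizing that the reindexed coefficients collapse exactly to $B_n$; once the double sum is rewritten in terms of $n = j+k$, everything else is bookkeeping and valuation estimates. I therefore expect no genuine obstacle beyond keeping the index ranges and the error bound straight.
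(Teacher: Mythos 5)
Your proof is correct and follows essentially the same route as the paper's: backward induction from the base case $A_{p-2}=\pi^{(p-2)p}+O(\pi^{(p-2)p+2})$, substitution of the inductive hypothesis into \eqref{eq:relAt}, and the reindexing $n=j+k$ that collapses the double sum via the convolution defining $B_n$ (the paper performs the same reindexing by shifting $k\mapsto k-s$ and swapping the order of summation). Your explicit check that the $b_j$ lie in $\Zp^\times$, so the error term is not disturbed, is a point the paper leaves implicit but is verified exactly as you say.
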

\begin{proof}
We proceed by (descending) induction. The base case is simply $(*)$ stated above. Suppose now that the lemma holds for $t,t+1,\dots,p-2$. By \eqref{eq:relAt}, we have
\begin{align*}
A_{t-1} &= \pi^{(t-1)p} + \sum_{s=1}^{p-2-(t-1)}b_sA_{t-1+s} + O(\pi^{(p-2)p+2}) \\
&= \pi^{(t-1)p} + \sum_{s=1}^{p-2-(t-1)}b_s\left( \sum_{k=0}^{p-2-(t-1+s)}B_k\pi^{(t-1+s+k)p} \right) + O(\pi^{(p-2)p+2}) \\
&= \pi^{(t-1)p} + \sum_{s=1}^{p-2-(t-1)}b_s\left( \sum_{k=s}^{p-2-(t-1)}B_{k-s}\pi^{(t-1+k)p} \right) + O(\pi^{(p-2)p+2}) \\
&= B_0\pi^{(t-1)p} + \sum_{k=1}^{p-2-(t-1)} \left( \sum_{s=1}^{k}b_sB_{k-s} \right)\pi^{(t-1+k)p} + O(\pi^{(p-2)p+2}) \\
&= \sum_{k=0}^{p-2-(t-1)} B_k\pi^{(t-1+k)p} + O(\pi^{(p-2)p+2}),
\end{align*}
so the lemma holds for $t-1$ as well.
\end{proof}

With Lemma~\ref{lem:formulaAt} in hand, it is now possible to rewrite \eqref{eq:newexpand2} as follows.

\begin{lem}\label{lem:B}
We have
\[
\frac{-p}{\pi^{\phi(p^2)}} = \sum_{t=0}^{p-2} B_t \pi^{tp}+\pi^{(p-2)p+1}+O(\pi^{(p-2)p+2}).
\]
\end{lem}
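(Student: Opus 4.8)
The plan is to substitute the closed form for $A_t$ from Lemma~\ref{lem:formulaAt} directly into the compact expansion \eqref{eq:compexpand} and then to recognise the resulting coefficients as the $B_n$ via their defining convolution. First I would replace each $A_t$ appearing in \eqref{eq:compexpand} by $\sum_{k=0}^{p-2-t}B_k\pi^{(t+k)p}+O(\pi^{(p-2)p+2})$. Before doing so I would record that every $b_t$ is a $p$-adic unit: since $1\le t\le p-2$, the denominator $t+1$ lies between $2$ and $p-1$ and is therefore coprime to $p$. Consequently $b_t\cdot O(\pi^{(p-2)p+2})=O(\pi^{(p-2)p+2})$, so all the error terms introduced by the substitution can be collected into a single $O(\pi^{(p-2)p+2})$.

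After substituting, the main term becomes the double sum $\sum_{t=1}^{p-2}b_t\sum_{k=0}^{p-2-t}B_k\pi^{(t+k)p}$. The key step is to reindex this sum according to the total exponent $n=t+k$. For a fixed $n$ with $1\le n\le p-2$, the pairs $(t,k)$ with $t\ge1$, $k\ge0$, $t+k=n$ and $k\le p-2-t$ run precisely over $t=1,\dots,n$ with $k=n-t$; indeed the constraint $k\le p-2-t$ reduces to $n\le p-2$, which always holds, and $k\ge0$ gives $t\le n$. Moreover the maximal exponent is $n=t+k\le p-2$, so no main term exceeds $\pi^{(p-2)p}$. Hence the coefficient of $\pi^{np}$ equals $\sum_{t=1}^{n}b_tB_{n-t}$, which is exactly $B_n$ by the recursion in Lemma~\ref{lem:formulaAt}. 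Therefore $\sum_{t=1}^{p-2}b_tA_t=\sum_{n=1}^{p-2}B_n\pi^{np}+O(\pi^{(p-2)p+2})$.

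Finally I would feed this back into \eqref{eq:compexpand}. Since $B_0=1$ and $\pi^{0\cdot p}=1$, the leading constant $1$ in \eqref{eq:compexpand} is precisely the $n=0$ term $B_0\pi^{0}$, so absorbing it into the sum extends the range to $n=0,\dots,p-2$ and yields the claimed identity, with the terms $\pi^{(p-2)p+1}$ and $O(\pi^{(p-2)p+2})$ carried over unchanged from \eqref{eq:compexpand}. The only genuine bookkeeping hazard is the reindexing of the double sum, together with the verification that the upper constraint $k\le p-2-t$ does not truncate any contribution to a given $\pi^{np}$ with $n\le p-2$; once that is checked, the identification of the inner sum with $B_n$ is immediate and no further obstacle remains.
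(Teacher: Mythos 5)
Your proof is correct and follows essentially the same route as the paper: substitute the closed form of $A_t$ from Lemma~\ref{lem:formulaAt} into \eqref{eq:compexpand}, reindex the double sum so that the coefficient of $\pi^{np}$ becomes the convolution $\sum_{t=1}^{n}b_tB_{n-t}=B_n$, and absorb the leading $1$ as $B_0$. Your reindexing by the total exponent $n=t+k$ is just a repackaging of the paper's shift-and-swap of the summation order, and your remark that the $b_t$ are $p$-adic units (so the error terms stay $O(\pi^{(p-2)p+2})$) is a small justification the paper leaves implicit.
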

\begin{proof}
Combining \eqref{eq:compexpand} and Lemma \ref{lem:formulaAt}, we have
\begin{align*}
\frac{-p}{\pi^{\phi(p^2)}} &= 1+\sum_{t=1}^{p-2}b_t\left( \sum_{k=0}^{p-2-t} B_k\pi^{(t+k)p} \right) + \pi^{(p-2)p+1} + O(\pi^{(p-2)p+2}) \\
&= 1+\sum_{t=1}^{p-2}b_t \sum_{k=t}^{p-2} B_{k-t}\pi^{kp} + \pi^{(p-2)p+1} + O(\pi^{(p-2)p+2}) \\
&= B_0+\sum_{k=1}^{p-2} \left( \sum_{t=1}^{k}b_tB_{k-t} \right)\pi^{kp} + \pi^{(p-2)p+1} + O(\pi^{(p-2)p+2}) \\
&= \sum_{k=0}^{p-2} B_k\pi^{kp} + \pi^{(p-2)p+1} + O(\pi^{(p-2)p+2}).
\end{align*}
\end{proof}

\begin{thm}
The expression
\[
\pi^{-2p+3}\left(\sqrt[p]{p}+\sum_{t=0}^{p-2}B_t\pi^{t+p-1}\right)
\]
gives a uniformizer of $\Qp(\zeta_{p^2},\sqrt[p]{p})$.
\end{thm}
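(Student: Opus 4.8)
The plan is to imitate the proof of Theorem~\ref{thm:1}, replacing the expansion \eqref{eq:newexpand} by its rewritten form in Lemma~\ref{lem:B}. Write $w$ for the expression in the statement. The first step is to raise the inner sum to the $p$-th power. Since every summand of $\sqrt[p]{p}+\sum_{t=0}^{p-2}B_t\pi^{t+p-1}$ lies in the ring of integers and has $\pi$-adic valuation at least $p-1>0$, every mixed term in the multinomial expansion carries a multinomial coefficient divisible by $p$, so that $v$ of such a term is at least $\phi(p^2)+\phi(p^2)=2\phi(p^2)$. Hence only the genuine $p$-th powers survive, and
\[
\left(\sqrt[p]{p}+\sum_{t=0}^{p-2}B_t\pi^{t+p-1}\right)^p = p+\sum_{t=0}^{p-2}B_t^p\,\pi^{(t+p-1)p}+O(\pi^{2\phi(p^2)}).
\]

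Next I would multiply through by $\pi^{(-2p+3)p}=\pi^{-2p^2+3p}$ to recover $w^p$. Using $\phi(p^2)=p^2-p$, the exponents simplify: the term $\pi^{-2p^2+3p}\cdot p$ becomes $\pi^{-(p-2)p}\cdot\frac{p}{\pi^{\phi(p^2)}}$, while $\pi^{-2p^2+3p}\cdot\pi^{(t+p-1)p}=\pi^{(t-p+2)p}=\pi^{-(p-2)p}\pi^{tp}$. The one genuinely arithmetic point is that $B_t^p$ must be traded for $B_t$: since $B_t\in\Zp$, Fermat's little theorem gives $B_t^p\equiv B_t\bmod p$, and the resulting discrepancy $(B_t^p-B_t)\pi^{(t-p+2)p}$ has valuation at least $\phi(p^2)+(t-p+2)p=(t+1)p\ge p$, hence is $O(\pi^p)$. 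After factoring out $\pi^{-(p-2)p}$ this yields
\[
w^p=\pi^{-(p-2)p}\left(\frac{p}{\pi^{\phi(p^2)}}+\sum_{t=0}^{p-2}B_t\pi^{tp}\right)+O(\pi^p).
\]

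Finally I would invoke Lemma~\ref{lem:B}, which rearranges to $\frac{p}{\pi^{\phi(p^2)}}+\sum_{t=0}^{p-2}B_t\pi^{tp}=-\pi^{(p-2)p+1}+O(\pi^{(p-2)p+2})$. Substituting and cancelling $\pi^{-(p-2)p}$ gives $w^p=-\pi+O(\pi^2)$, the stray $O(\pi^p)$ being harmless since $p\ge 3$. Therefore $v(w^p)=1$, so $v(w)=\tfrac1p$, exactly as in Theorem~\ref{thm:1}; since $K_{2,1}/\Qp(\zeta_{p^2})$ is totally ramified of degree $p$, an element of valuation $\tfrac1p$ is a uniformizer, which proves the claim. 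The main obstacle is purely the bookkeeping of valuations, namely ensuring that the cross terms, the Fermat corrections $B_t^p-B_t$, and the tail supplied by Lemma~\ref{lem:B} all fall into $O(\pi^2)$ so that the leading term $-\pi$ is cleanly isolated; the only arithmetic input beyond the earlier lemmas is the congruence $B_t^p\equiv B_t$.
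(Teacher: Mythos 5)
Your proposal is correct and takes essentially the same approach as the paper: the paper's own proof is simply to rerun the argument of Theorem~\ref{thm:1} with \eqref{eq:newexpand} replaced by Lemma~\ref{lem:B}, which is exactly what you do. The only differences are cosmetic --- you expand the $p$-th power before factoring out $\pi^{-2p+3}$ rather than after, and you make explicit the Fermat correction $B_t^p\equiv B_t \bmod p$ (together with the valuation bookkeeping showing the cross terms and discrepancies land in $O(\pi^2)$) that the paper leaves implicit.
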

\begin{proof}
It follows from exactly the same proof as Theorem~\ref{thm:1} on replacing \eqref{eq:newexpand} by Lemma~\ref{lem:B}.
\end{proof}

We have worked out this uniformizer explicitly for small values of $p$:
\begin{eqnarray*}
   &p=3: \quad &\pi^{-3}(3^{\frac{1}{3}}+\pi^2+2\pi^3) \\
&p=5: \quad& \pi^{-7}(5^{\frac{1}{5}}+\pi^4+3\pi^5+2\pi^6+4\pi^7)\\
&p=7: \quad& \pi^{-11}(7^{\frac{1}{7}}+\pi^6+4\pi^7+4\pi^8+5\pi^9+5\pi^{10}+4\pi^{11}) \\
&p=11: \quad& \pi^{-19}(11^{\frac{1}{11}}+\pi^{10}+6\pi^{11}+10\pi^{12}+6\pi^{13}+5\pi^{14}+6\pi^{15}+3\pi^{16}+\\
&&7\pi^{18}+9\pi^{19}) \\
&p=13: \quad& \pi^{-23}(13^{\frac{1}{13}}+\pi^{12}+7\pi^{13}+\pi^{14}+6\pi^{15}+4\pi^{16}+4\pi^{17}+2\pi^{18}+\\
&&11\pi^{19}+7\pi^{20}+10\pi^{21}+4\pi^{22}+\pi^{23}) 
\end{eqnarray*}

\begin{remark}\label{rk:fail}
 The key of the construction relies crucially on finding an integer  $\ell$ such that $p|| a_{\ell}$ in the expression \eqref{eq:expand} with  $p\nmid \ell$. In the case $m=2$, our calculations in \S\ref{sec:pre} show that $\ell=d-1=(p-2)p+1$ satisfies these two conditions.  Unfortunately, when $m>2$, our choice of $d$ satisfies the first condition, but not the second one. In all the numerical examples where  $m>2$ we have studied, we have not found  a single instance where an integer $\ell$ satisfying  both conditions exists. It seems to suggest that completely new ideas will be required to construct explicit uniformizers of $K_{m,n}$ for arbitrary $m$ and $n$.
\end{remark}
% BibTeX users please use one of
%\bibliographystyle{spbasic}      % basic style, author-year citations
%\bibliographystyle{spmpsci}      % mathematics and physical sciences
%\bibliographystyle{spphys}       % APS-like style for physics
%\bibliography{}   % name your BibTeX data base

% Non-BibTeX users please use

\end{document}